\newtheorem{theorem}{Theorem}
\newtheorem{lemma}[theorem]{Lemma}
\newtheorem{conjecture}[theorem]{Conjecture}
\theoremstyle{remark}
\begin{document}

\title[Finite type of a shrinking Ricci soliton]
 {Complete gradient shrinking Ricci solitons have finite topological type}

\author[F. Fang]{Fuquan Fang}
\thanks{Supported by NSF Grant of China \#10671097 and the Capital Normal University}
\address{Department of Mathematics, Capital Normal University,
Beijing, P.R. China}
\email{fuquan\_fang@yahoo.com}
\author[J. Man]{Jianwen Man}
\address{Nankai Institute of Mathematics,
Weijin Road 94, Tianjin 300071, P.R. China}
\author[Z. Zhang]{Zhenlei Zhang}
\address{Nankai Institute of Mathematics,
Weijin Road 94, Tianjin 300071, P.R. China}
\email{zhleigo@yahoo.com.cn}

\subjclass[2000]{Primary 53C25; Secondary 53C21, 55Q52 }

\keywords{Ricci soliton, finite topological type}

\date{}

\dedicatory{}

\commby{}

\maketitle


\begin{abstract}
We show that a complete Riemannian manifold has finite topological
type (i.e., homeomorphic  to the interior of a compact manifold
with boundary), provided its Bakry-\'{E}mery Ricci tensor has a
positive lower bound, and either of the following conditions:
\begin{itemize}
 \item[(i)] the Ricci curvature is bounded from above;
 \item[(ii)] the Ricci curvature is bounded from below and injectivity radius is bounded away from
 zero.
\end{itemize}
 Moreover, a
complete shrinking Ricci soliton has finite topological type if
its scalar curvature is bounded.
\end{abstract}


\section{Introduction}

In 1968, J. Milnor \cite{Mi} conjectured that a complete
non-compact Riemannian manifold with non-negative Ricci curvature
has a finitely generated fundamental group. However, such a
manifold may not have finite topological type. Examples of
complete non-compact manifold with positive Ricci curvature
without finite topological type was constructed by Gromoll-Meyer
\cite{GM}. It has been an interesting topic in Riemannian geometry
to study the topology of complete manifolds with positive
(non-negative) Ricci curvature.

In this note we are concerned with complete Riemannian manifold
$(M,g)$ satisfying that $\text{Ric}+\text{Hess}(f)\geq\lambda g$
for some constant $\lambda>0$ and $f\in C^{\infty}(M)$, i.e.,
whose Bakry-\'{E}mery Ricci tensor is bounded below by $\lambda$
in the sense of \cite{Lo}. When the equality holds, the manifold
is a shrinking Ricci soliton, i.e., a self-similar solution of the
well-known  Ricci flow equation. If $f$ is constant,
Bakry-\'{E}mery Ricci tensor reduces to the Ricci tensor, and so
the classical Myers' theorem implies that $M$ is compact with
finite fundamental group. In general, $M$ may not be compact, but
from the work of \cite{FG,LX,Lo,W,WW,Zh} etc., $M$ still has
finite fundamental group.

The main result of this note shows that a complete Riemannian
manifold whose Bakry-\'{E}mery Ricci tensor is bounded below by
$\lambda >0$ has finite topological type, provided the Ricci
curvature is bounded from above. Moreover, a shrinking Ricci soliton
has finite topological type if its scalar curvature is bounded.

\begin{theorem}\label{T}
Suppose $(M,g)$ is a complete Riemannian manifold  satisfying
$\text{Ric}+\text{Hess}(f)\geq\lambda g$ for some constant
$\lambda>0$ and $f\in C^{\infty}(M)$. Then $M$ is of finite
topological type, if either of the following alternative conditions
holds:
\begin{itemize}
 \item[(i)] $\text{Ric}\leq Cg$ for some constant $C<\infty$;
 \item[(ii)] $\text{Ric}\geq-\delta^{-1}g$ and the injectivity radius
 ${\rm inj}(M,g)\geq\delta>0$ for some
 $\delta>0$.
\end{itemize}
\end{theorem}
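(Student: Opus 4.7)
I would try to use the potential $f$ itself as a proper, Morse-type exhaustion of $M$ whose gradient has no zeros outside a compact set. Given both properties, the flow of $-\nabla f/|\nabla f|^2$ gives a diffeomorphism between the complement of some compact sublevel set $K=\{f\le c\}$ and $\partial K\times[0,\infty)$, so $M$ is diffeomorphic to the interior of the compact manifold with boundary $K$, which is by definition finite topological type.

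\textbf{Step 1: properness of $f$.} Along a unit-speed minimizing geodesic $\gamma$, the hypothesis rewrites as $(f\circ\gamma)''(t)\ge\lambda-\mathrm{Ric}(\gamma'(t),\gamma'(t))$. A weighted second-variation / Myers-type argument, in the spirit of Lott, Qian, and Wei--Wylie, should show that $\mathrm{Ric}_f\ge\lambda g$ with $\lambda>0$ rules out any ray on which $f$ stays bounded; quantitatively, one expects a lower bound of the shape $f(x)\ge a\,d(p,x)^2-b$ for some $a,b>0$ and a fixed base point $p$, the Bakry--\'{E}mery analogue of the Cao--Zhou potential-growth estimate for true shrinking solitons.

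\textbf{Step 2: $|\nabla f|$ bounded below outside a compact set.} Under (i) the upper bound $\mathrm{Ric}\le Cg$ combined with the Bakry--\'{E}mery lower bound pins $\mathrm{Hess}(f)\ge(\lambda-C)g$; running the weighted Bochner formula for $|\nabla f|^2$ and integrating along integral curves of $\nabla f$ should then convert the quadratic growth of $f$ from Step 1 into $|\nabla f|(x)\to\infty$ as $d(p,x)\to\infty$. Under (ii) the Ricci lower bound together with ${\rm inj}(M,g)\ge\delta$ yields, by Anderson-type harmonic coordinates, uniform local $C^{1,\alpha}$ control of the metric on balls of a fixed radius; in such coordinates $f$ satisfies a uniformly elliptic second-order equation and hence enjoys uniform local $C^{2,\alpha}$ bounds. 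A blow-up/contradiction argument then closes the loop: if $|\nabla f|\to 0$ along a sequence diverging to infinity, a pointed $C^{1,\alpha}$ subsequential limit would produce a complete space on which the Bakry--\'{E}mery condition holds while $f$ is bounded on bounded sets, contradicting Step 1.

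\textbf{Main obstacle.} The chief difficulty is Step 2. The lower bound on $\mathrm{Ric}+\mathrm{Hess}(f)$ is a one-sided statement that controls $f$ but not $\nabla f$ directly, and the extra hypothesis (i) or (ii) is precisely what bridges this gap: under (i) via a Bochner-type formula exploiting the two-sided Ricci control, and under (ii) via a compactness/blow-up argument that exploits the uniform regularity of the metric and of $f$ supplied by the harmonic-coordinate theorem. Neither path is as clean as in the true-soliton case, where the identity $R+|\nabla f|^2=2\lambda f+\mathrm{const}$ makes the gradient growth essentially algebraic once $R$ is bounded.
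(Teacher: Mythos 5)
Your overall framing (make $f$ a proper function with no critical points outside a compact set, then invoke Morse theory) is exactly the right one, and you correctly identify the starting inequality $(f\circ\gamma)''\ge\lambda-\mathrm{Ric}(\dot\gamma,\dot\gamma)$. But the way you split the work creates genuine gaps. First, your Step 1 asserts that properness (indeed quadratic growth) of $f$ follows from $\mathrm{Ric}+\mathrm{Hess}(f)\ge\lambda g$ alone, before hypotheses (i)/(ii) enter. That is not available: integrating $(f\circ\gamma)''$ along a minimizing geodesic from $p$ to $q$ leaves the term $\int_0^L\mathrm{Ric}(\dot\gamma,\dot\gamma)\,dt$, and the second-variation (Calabi/Myers) trick only controls the weighted integral $\int\varphi^2\,\mathrm{Ric}(\dot\gamma,\dot\gamma)\le(n-1)\int\dot\varphi^2$ with $\varphi$ vanishing at the endpoints; the uncontrolled piece is precisely the contribution near the far endpoint $q$, and taming it is exactly what (i) or (ii) is for. (The Cao--Zhou-type quadratic growth you cite uses the soliton \emph{equality} together with $R+|\nabla f|^2=f$ and $R\ge0$; it has no analogue for the bare inequality.) Second, your Step 2 mechanisms do not close. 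Under (i), $\mathrm{Hess}(f)\ge(\lambda-C)g$ is vacuous once $C>\lambda$, and properness of $f$ does not by itself force $|\nabla f|\to\infty$: a proper function can have critical points marching off to infinity. Under (ii), $f$ satisfies no elliptic equation --- the hypothesis is a one-sided differential inequality on $\mathrm{Hess}(f)$, and even its trace only gives $\Delta f\ge n\lambda-R$ --- so there is no source for the uniform $C^{2,\alpha}$ bounds your blow-up argument needs; moreover $f$ is unbounded, so any pointed limit requires renormalizing $f$, after which "$f$ bounded on bounded sets'' no longer contradicts Step 1.

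The argument that actually works is a single, much more elementary estimate that delivers properness and the gradient lower bound simultaneously: $\langle\nabla f,\dot\gamma\rangle(q)\ge\lambda\,d(p,q)-|\nabla f|(p)-\int_0^L\mathrm{Ric}(\dot\gamma,\dot\gamma)\,dt$, so everything reduces to bounding $\int_0^L\mathrm{Ric}(\dot\gamma,\dot\gamma)\,dt$ by a constant $\Lambda$ independent of $q$. With the cutoff $\varphi$ equal to $1$ away from the endpoints, the middle of the integral is $\le 2(n-1)/\ell$ for endpoint segments of length $\ell$; under (i) the two endpoint integrals are trivially $\le 2C\ell$; under (ii) one takes $\ell=\delta/3$, uses the injectivity radius bound to extend each short end-segment to a \emph{minimizing} geodesic of length $\delta$, applies the same second-variation bound on that extension, and uses $\mathrm{Ric}\ge-\delta^{-1}g$ to discard the unwanted pieces. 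This gives $|\nabla f|(q)\ge\lambda\,d(p,q)-|\nabla f|(p)-\Lambda$, which is linear growth of $|\nabla f|$, hence both properness and the absence of distant critical points, with no Bochner formula, harmonic coordinates, or compactness argument needed.
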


If $(M,g)$ is a shrinking Ricci soliton, then the Ricci curvature
bounds can be relaxed by scalar curvature.

\begin{theorem}\label{TT}
Suppose $(M,g)$ is a complete shrinking Ricci soliton
$Ric+Hess(f)=\frac{g}{2}$, where $f\in C^{\infty}(M)$. If the
scalar curvature $R$ is bounded, then $M$ has finite topological
type.
\end{theorem}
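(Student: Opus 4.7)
The plan is to avoid passing through Theorem \ref{T} and instead exploit the soliton structure directly, using the potential function $f$ as a Morse-theoretic exhaustion on $M$. The topological conclusion will follow once I verify two properties: (a) $f\colon M\to\mathbb{R}$ is proper with $f(x)\to+\infty$ as $x\to\infty$, and (b) $|\nabla f|>0$ outside some compact subset of $M$. Granted (a) and (b), pick a regular value $a$ of $f$ larger than the supremum of $f$ over its critical set. Then $\{f\le a\}$ is a compact smooth manifold with boundary, and the flow of the rescaled vector field $\nabla f/|\nabla f|^{2}$ on $\{f\ge a\}$ yields a diffeomorphism $\{f\ge a\}\cong \{f=a\}\times[0,\infty)$. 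Gluing back shows that $M$ is diffeomorphic to the interior of the compact manifold with boundary $\{f\le a\}$, hence has finite topological type.

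To establish (b) I would invoke Hamilton's classical identity for gradient shrinking Ricci solitons,
\[
R+|\nabla f|^{2}-f=\mathrm{const},
\]
obtained by taking the divergence of the soliton equation and applying the contracted second Bianchi identity. After normalizing $f$ by an additive constant this becomes $|\nabla f|^{2}=f-R$, so boundedness of $R$ combined with $f\to\infty$ at infinity forces $|\nabla f|^{2}\to\infty$, which in particular confines the critical set of $f$ to a compact region. The same growth makes the rescaled gradient flow above complete on the end, since $|\nabla f/|\nabla f|^{2}|=|\nabla f|^{-1}\to0$ and level sets are compact.

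The heart of the argument is therefore (a), the properness of $f$ with quadratic growth. For this I would appeal to the Cao--Zhou asymptotic estimate: for a complete gradient shrinking Ricci soliton under the above normalization, if $x_{0}$ is a minimum point of $f$ and $r(x)=d(x_{0},x)$, then
\[
\tfrac{1}{4}\bigl(r(x)-c_{1}\bigr)_{+}^{2}\le f(x)\le \tfrac{1}{4}\bigl(r(x)+c_{2}\bigr)^{2}
\]
for constants $c_{1},c_{2}$ depending only on $n=\dim M$. This is where the real geometric content lies and is the main obstacle of the proof; its derivation rests on second-variation arguments along minimizing geodesics from $x_{0}$ together with the soliton equation and the nonnegativity $R\ge 0$ of scalar curvature on a complete shrinking soliton. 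Once the Cao--Zhou estimate is available, the topology argument sketched in the first paragraph runs without any further analytic input, and the boundedness of $R$ is used only in the elementary step (b).
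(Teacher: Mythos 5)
Your overall architecture coincides with the paper's: normalize $f$ so that $R+|\nabla f|^{2}=f$, show that $f$ is a proper exhaustion whose critical set is contained in a compact set, and conclude by the standard deformation along $\nabla f/|\nabla f|^{2}$. That reduction, and your step (b) --- on the set where $f>\sup|R|$ the identity $|\nabla f|^{2}=f-R$ forces $\nabla f\neq 0$ --- are correct.

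The gap is that the entire analytic content of the theorem has been placed into the Cao--Zhou quadratic growth estimate, which you do not prove and which you yourself flag as ``the main obstacle of the proof.'' The one-line indication you give of its derivation (second variation along minimizing geodesics plus the soliton equation and $R\geq 0$) does not reconstruct it: the difficulty in the lower bound $f(x)\geq\frac{1}{4}(r(x)-c_{1})_{+}^{2}$ is precisely that the second variation inequality $\int\varphi^{2}\text{Ric}(\dot\gamma,\dot\gamma)\,dt\leq(n-1)\int|\dot\varphi|^{2}\,dt$ leaves uncontrolled the contribution of $\text{Ric}(\dot\gamma,\dot\gamma)$ on the final unit segment of the geodesic where the cutoff is small; handling it requires substituting $\text{Ric}(\dot\gamma,\dot\gamma)=\frac{1}{2}-\frac{d^{2}}{dt^{2}}f(\gamma(t))$ there and integrating by parts to trade that term for the oscillation of $f$ on $B(q,1)$, which must then itself be bounded. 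This is exactly the content of the paper's Lemma \ref{l22} combined with the observation that at a critical point $q$ one has $f(q)=R(q)$, so $|\nabla f|\leq\sqrt{f+C}$ yields $f\leq 3C+2$ on $B(q,1)$; it is also the crux of the Cao--Zhou argument you invoke, which historically postdates and builds on this very paper, so citing it here is close to assuming the conclusion. Your sketch also quietly relies on $R\geq 0$ for complete shrinking solitons, itself a nontrivial theorem (B.-L.~Chen), whereas the paper needs only the hypothesis $|R|\leq C$. Finally, note that the full two-sided quadratic asymptotics are more than is needed: the paper only requires an upper bound for $f$ near a critical point together with the linear lower bound $|\nabla f|(q)\geq\frac{1}{2}d(p,q)-|\nabla f|(p)-\Lambda$ coming from integrating the soliton inequality along a minimal geodesic. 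To make your proposal a proof, you must supply the second-variation-plus-integration-by-parts estimate rather than cite it.
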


In view of Theorem 2 it is nature to pose the following

\begin{conjecture}
Any shrinking Ricci soliton has finite topological type.
\end{conjecture}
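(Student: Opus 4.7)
The plan is to exhibit $f$ itself as a Morse-type function whose critical set is compact, and then to use a gradient flow at infinity to identify $M$ with the interior of a compact manifold with boundary. The key inputs are Hamilton's identity for a normalized gradient shrinking soliton,
\[
R+|\nabla f|^{2}=f+C_{0},
\]
obtained by differentiating $\text{Ric}+\text{Hess}(f)=g/2$ and applying the contracted second Bianchi identity, together with the Cao--Zhou quadratic growth estimate
\[
\tfrac{1}{4}\bigl(d(x,x_{0})-c_{1}\bigr)_{+}^{2}\le f(x)\le \tfrac{1}{4}\bigl(d(x,x_{0})+c_{2}\bigr)^{2}
\]
valid on every complete shrinking gradient soliton. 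In particular $f$ is proper, so all sublevel sets $\{f\le c\}$ are compact.

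With these in hand, the critical set of $f$ is automatically confined: if $\nabla f(p)=0$, then $f(p)=R(p)-C_{0}$, so boundedness of $R$ traps the critical values of $f$ in a bounded interval and properness forces the critical set to lie in a compact sublevel set $\{f\le c_{0}\}$. By Sard's theorem I would pick a regular value $c>c_{0}$; then $\{f\le c\}$ is a compact manifold with boundary $\Sigma:=f^{-1}(c)$, and $\nabla f$ has no zeros on $\{f\ge c\}$.

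On $\{f\ge c\}$ I would flow along $X:=\nabla f/|\nabla f|^{2}$, which satisfies $Xf\equiv 1$. Hamilton's identity combined with the Cao--Zhou estimate gives $|\nabla f|\sim\tfrac{1}{2}d(\cdot,x_{0})$ at infinity, so $|X|$ is bounded (in fact tends to zero) and forward trajectories stay in compact sublevel sets by properness; hence the flow is complete and produces a diffeomorphism $\Sigma\times[0,\infty)\to\{f\ge c\}$. Gluing this cylinder onto $\{f\le c\}$ exhibits $M$ as the interior of a compact manifold with boundary, i.e.\ of finite topological type. The main obstacle I anticipate is really the Cao--Zhou input, which rests on the full soliton structure (sharp use of the second Bianchi identity together with maximum principle arguments along minimizing geodesics) and is not accessible from the Bakry--\'{E}mery lower bound alone as in Theorem \ref{T}; once that input is granted, the remaining end-of-manifold argument is standard, and the striking feature is that boundedness of scalar curvature is enough to localize the critical set of $f$, a substantially weaker hypothesis than the Ricci bounds in Theorem \ref{T}.
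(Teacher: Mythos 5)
The statement you are addressing is posed in the paper as an open \emph{conjecture}; the paper offers no proof of it, only of the weaker Theorems \ref{T} and \ref{TT}. Your argument does not close that gap. The decisive step --- ``boundedness of $R$ traps the critical values of $f$'' --- invokes a hypothesis (bounded scalar curvature) that the conjecture does not grant you. From $R+|\nabla f|^{2}=f+C_{0}$ one only gets $|\nabla f|^{2}=f+C_{0}-R$, and while $f$ grows quadratically by the Cao--Zhou estimate, nothing in the bare soliton equation prevents $R$ from coming arbitrarily close to $f+C_{0}$ along a sequence going to infinity, i.e.\ from producing critical points of $f$ arbitrarily far from the base point. (One does know $0\le R\le f+C_{0}$ on any complete shrinking soliton, but strict inequality outside a compact set is precisely the open issue.) So what you have written is, in substance, an alternative proof of Theorem \ref{TT}, not of the conjecture; you even acknowledge this by highlighting bounded scalar curvature as the ``striking feature'' of your hypothesis.

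As a proof of Theorem \ref{TT}, your route is genuinely different from the paper's and arguably cleaner: the paper localizes the critical set by integrating $\text{Hess}(f)\ge\frac{1}{2}g-\text{Ric}$ along minimizing geodesics and bounding $\int_{0}^{\rho}\text{Ric}(\dot{\gamma},\dot{\gamma})\,dt$ from above via the second variation formula together with an integration by parts near the far endpoint; properness of $f$ is then \emph{derived} from the resulting linear growth of $|\nabla f|$. You instead import the Cao--Zhou quadratic growth estimate (which postdates this paper) to get properness for free and read off compactness of the critical set directly from $|\nabla f|^{2}\ge f+C_{0}-C$. Both work under the bounded scalar curvature hypothesis; yours trades an elementary comparison argument for a stronger external input. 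Two small points: you do not need Sard or Morse nondegeneracy --- once $f$ is proper with no critical points on $\{f\ge c_{0}\}$, every $c>c_{0}$ is automatically a regular value --- and the normalized gradient flow you describe is exactly the deformation lemma the paper also appeals to, so that part is fine.
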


We prove Theorem 1 in section 2 and Theorem 2 in section 3.



\section{Proof of theorem \ref{T}}

Let $(M,g)$ be such a manifold satisfying that
$\text{Ric}+\text{Hess}(f)\geq\lambda g$ for some $\lambda>0$ and
$f\in C^{\infty}(M)$. By the deformation lemma of Morse theory, to
prove Theorem \ref{T}, it suffices to show that the function $f$ is
proper and has no critical points outside of a compact set.

First fix one point $p\in M$ as a base point. For any $q\in M$ with
$d(p,q)=L$, choose a shortest geodesic $\gamma$ from $p$ to $q$
parametrized by arc length. Then
\begin{eqnarray}
\langle \nabla f,\dot{\gamma}\rangle (q)&=&\langle\nabla
f,\dot{\gamma}\rangle(p)+\int_{0}^{L}\frac{d^{2}}{dt^{2}}f(\gamma(t))dt\nonumber\\
&\geq&\langle\nabla
f,\dot{\gamma}\rangle(p)+\int_{0}^{L}(\lambda-Ric(\dot{\gamma},\dot{\gamma}))dt\nonumber\\
&\geq&\lambda L-|\nabla
f|(p)-\int_{0}^{L}Ric(\dot{\gamma},\dot{\gamma})dt.\nonumber
\end{eqnarray}

If the integral $$\int_{0}^{L}Ric(\dot{\gamma},\dot{\gamma})dt\leq
\Lambda$$ for some constant $\Lambda$ independent of $q$ and the
choice of $\gamma$, then $$|\nabla f|(q)\geq\langle\nabla
f,\dot{\gamma}\rangle(q)\geq\lambda d(p,q)-|\nabla f|(p)-\Lambda,$$
which implies that $|\nabla f|(q)$ has a linear growth in $d(p,q)$
and so $f$ is a proper function without critical points outside of a
compact set. In the remainder of this section, we will focus on
proving that $\int_{0}^{L}Ric(\dot{\gamma},\dot{\gamma})$ has an
upper bound under the assumptions of Theorem \ref{T}.

\vskip 2mm

Case (i): $\text{Ric}\leq Cg$ for some constant $C<\infty$;

By Lemma 2.2 of \cite{W}, the integral bound is given by
$\Lambda=2(n-1)+2C$.

\vskip 2mm

Case (ii): $\text{Ric}\geq-\delta^{-1}g$ and   ${\rm
inj}(M,g)\geq\delta>0$ for some $\delta>0$.

Suppose $d(p,q)=L\geq\delta$. Let
$\varphi(t):[0,L]\rightarrow[0,1]$ be an arcwise smooth function
such that $\varphi(0)=\varphi(L)=0$. By the second variation
formula, as did in \cite{W} or \cite{Zh}, we have the following
estimate:
$$\int_{0}^{L}\varphi^{2}(t)Ric(\dot{\gamma},\dot{\gamma})dt\leq(n-1)\int_{0}^{L}|\dot{\varphi}|^{2}dt.$$
Now define $\varphi$ by
\begin{equation}\nonumber
\varphi=\left\{ \begin{array}{ll}
        \frac{3}{\delta}t,&t\in[0,\frac{\delta}{3}]; \\
        1,&t\in[\frac{\delta}{3},L-\frac{\delta}{3}];\\
        \frac{3}{\delta}(L-t),&t\in[L-\frac{\delta}{3},L],
\end{array} \right.
\end{equation}
then we have the estimate
\begin{eqnarray}
\int_{0}^{L}Ric(\dot{\gamma},\dot{\gamma})dt&\leq&(n-1)\int_{0}^{L}|\dot{\varphi}|^{2}dt
+\int_{0}^{\frac{\delta}{3}}(1-\varphi^{2})Ric(\dot{\gamma},\dot{\gamma})dt
+\int_{L-\frac{\delta}{3}}^{L}(1-\varphi^{2})Ric(\dot{\gamma},\dot{\gamma})dt\nonumber\\
&\leq&\frac{6}{\delta}(n-1)+\frac{2}{3}+\int_{0}^{\frac{\delta}{3}}Ric(\dot{\gamma},\dot{\gamma})dt+
\int_{L-\frac{\delta}{3}}^{L}Ric(\dot{\gamma},\dot{\gamma})dt\nonumber,
\end{eqnarray}
where in the second inequality, we used the fact that
\begin{eqnarray}
\int_{0}^{\frac{\delta}{3}}(1-\varphi^{2})Ric(\dot{\gamma},\dot{\gamma})dt&=&
\int_{0}^{\frac{\delta}{3}}(1-\varphi^{2})(Ric(\dot{\gamma},\dot{\gamma})+\frac{1}{\delta})dt
-\frac{1}{\delta}\int_{0}^{\frac{\delta}{3}}(1-\varphi^{2})dt\nonumber\\
&\leq&\int_{0}^{\frac{\delta}{3}}(Ric(\dot{\gamma},\dot{\gamma})+\frac{1}{\delta})dt
-\frac{1}{\delta}\int_{0}^{\frac{\delta}{3}}(1-\varphi^{2})dt\nonumber\\
&\leq&\int_{0}^{\frac{\delta}{3}}Ric(\dot{\gamma},\dot{\gamma})dt+
\frac{1}{\delta}\int_{0}^{\frac{\delta}{3}}\varphi^{2}dt\nonumber\\
&\leq&\int_{0}^{\frac{\delta}{3}}Ric(\dot{\gamma},\dot{\gamma})dt+\frac{1}{3},\nonumber
\end{eqnarray}
and similarly
$$\int_{L-\frac{\delta}{3}}^{L}(1-\varphi^{2})Ric(\dot{\gamma},\dot{\gamma})dt\leq
\int_{L-\frac{\delta}{3}}^{L}Ric(\dot{\gamma},\dot{\gamma})dt+\frac{\delta}{3}.$$

We next prove that
$\int_{0}^{\frac{\delta}{3}}Ric(\dot{\gamma},\dot{\gamma})dt$ and
$\int_{L-\frac{\delta}{3}}^{L}Ric(\dot{\gamma},\dot{\gamma})dt$ are
bounded from above and so finish the proof of Theorem 1. This is
given by the following lemma.

\begin{lemma}
If $\text{Ric}\geq-\delta^{-1}g$ and   ${\rm
inj}(M,g)\geq\delta>0$ for some $\delta>0$, then
$$\int_{0}^{\frac{\delta}{3}}Ric(\dot{\gamma},\dot{\gamma})dt\leq\frac{6}{\delta}(n-1)+\frac{2}{3}$$
for any minimal arc length parametrized geodesic
$\gamma:[0,\frac{\delta}{3}]\rightarrow M$.
\end{lemma}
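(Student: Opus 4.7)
The plan is to embed the short minimal geodesic $\gamma$ into a \emph{longer} minimizing geodesic of length $\delta$, on which we have enough room to place a trapezoidal cutoff that equals $1$ throughout $[0,\delta/3]$. By completeness of $M$, the geodesic $\gamma$ extends uniquely to a geodesic $\tilde\gamma:[-\delta/3,2\delta/3]\to M$ of total length $\delta$. The key observation is that $\tilde\gamma$ is itself length-minimizing: viewed as the geodesic issuing from $\tilde\gamma(-\delta/3)$ of length exactly $\delta$, it is minimizing because the hypothesis $\mathrm{inj}(M,g)\geq\delta$ means every geodesic from any point of $M$ of length at most $\delta$ is minimizing.

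Having promoted $\gamma$ to the longer minimizing geodesic $\tilde\gamma$, the second variation formula applies. Choose the trapezoidal cutoff $\varphi:[-\delta/3,2\delta/3]\to[0,1]$ that is linear of slope $\pm 3/\delta$ on each of the two outer intervals of length $\delta/3$ and identically $1$ on the middle piece $[0,\delta/3]$. With $\{E_i\}_{i=1}^{n-1}$ a parallel orthonormal frame perpendicular to $\dot{\tilde\gamma}$, the variations $\varphi E_i$ vanish at both endpoints of $\tilde\gamma$, so summing over $i$ the standard index-form inequality yields
\[
    \int_{-\delta/3}^{2\delta/3}\varphi^2\,\text{Ric}(\dot{\tilde\gamma},\dot{\tilde\gamma})\,dt \leq (n-1)\int_{-\delta/3}^{2\delta/3}|\dot\varphi|^2\,dt = \frac{6(n-1)}{\delta}.
\]

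Next I will split the left-hand side at $t=0$ and $t=\delta/3$. The middle piece, where $\varphi\equiv 1$, contributes precisely $\int_0^{\delta/3}\text{Ric}(\dot\gamma,\dot\gamma)\,dt$. Each of the two slope-region integrals is bounded below by $-\delta^{-1}\int\varphi^2\,dt=-1/9$, using $\text{Ric}\geq-\delta^{-1}g$ together with the routine computation $\int_{-\delta/3}^0\varphi^2\,dt=\delta/9$. Rearranging produces
\[
    \int_0^{\delta/3}\text{Ric}(\dot\gamma,\dot\gamma)\,dt \leq \frac{6(n-1)}{\delta}+\frac{2}{9},
\]
which is even stronger than the stated bound. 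The main obstacle is conceptual rather than computational: the length $\delta/3$ is too short for a direct second-variation argument, and one must realize that the injectivity radius hypothesis is precisely what allows the geodesic to be enlarged on both sides into a minimizing one of length $\delta$ with the target segment sitting in the region where $\varphi\equiv 1$. Once this extension step is taken, the remainder is a direct bookkeeping calculation mirroring the bump-function technique already used in case (ii) of Theorem \ref{T}.
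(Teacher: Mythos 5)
Your proof is correct and follows essentially the same route as the paper: use the injectivity radius bound to extend $\gamma$ to a minimizing geodesic of length $\delta$, apply the second variation inequality with a trapezoidal cutoff equal to $1$ on the original segment, and control the two outer integrals with the Ricci lower bound. Your bookkeeping is in fact slightly sharper (you compute $\int\varphi^2\,dt=\delta/9$ on each slope region and obtain $2/9$ where the paper settles for the cruder bound $2/3$), but the argument is the same.
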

\begin{proof}
Firstly, by ${\rm inj}(M,g)\geq\delta$, we can extend the geodesic
$\gamma$ to a shortest geodesic $\sigma:[0,\delta]\rightarrow M$,
such that
$\gamma(t)=\sigma(t+\frac{\delta}{3}),t\in[0,\frac{\delta}{3}]$.

Set $L=\delta$ in the arguments above, we have
$$\int_{0}^{\delta}\varphi^{2}Ric(\dot{\sigma},\dot{\sigma})dt\leq(n-1)\int_{0}^{\delta}|\dot{\varphi}|^{2}dt
=\frac{6}{\delta}(n-1),$$ then using $Ric\geq-\delta^{-1}g$, we get
the estimate
\begin{eqnarray}
\int_{0}^{\frac{\delta}{3}}Ric(\dot{\gamma},\dot{\gamma})dt&=&\int_{\frac{\delta}{3}}^{\frac{2\delta}{3}}
Ric(\dot{\sigma},\dot{\sigma})dt\nonumber\\
&\leq&\frac{6}{\delta}(n-1)-\int_{0}^{\frac{\delta}{3}}\varphi^{2}Ric(\dot{\sigma},\dot{\sigma})dt
-\int_{\frac{2\delta}{3}}^{\delta}\varphi^{2}Ric(\dot{\sigma},\dot{\sigma})dt\nonumber\\
&\leq&\frac{6}{\delta}(n-1)+\frac{2}{3}.\nonumber
\end{eqnarray}
This concludes the result.
\end{proof}



\section{Proof of Theorem \ref{TT}}

As before, we will prove that the potential function $f$ to the
Ricci soliton is proper and has no critical points outside of
$B(p,\rho)$ for large $\rho$.

Suppose $(M,g)$ is a complete shrinking Ricci soliton which
satisfies
\begin{equation}\label{e1}
\text{Ric}+\text{Hess}(f)=\frac{g}{2}
\end{equation}
for some potential function $f$. Suppose further that the scalar
curvature $|R|\leq C$ for some constant $C<\infty$. It's well-known
that the following analytic equality holds for the soliton ( after
modifying $f$ by a translation, see \cite{CHI} for example.):
\begin{equation}\label{e2}
R+|\nabla f|^{2}=f.
\end{equation}

We begin with several lemmas. Let $q\in M$ be one critical point of
$f$ and denote by $\rho=d(p,q)$ the distance from $p$ to $q$. Let
$\gamma$ be a shortest arc length parametrized geodesic from $p$ to
$q$. Then we have
\begin{lemma}\label{l21}
\begin{equation}\label{e3}
\frac{\rho}{2}-|\nabla
f|(p)\leq\int_{0}^{\rho}Ric(\dot{\gamma},\dot{\gamma})dt.
\end{equation}
\end{lemma}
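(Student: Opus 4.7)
The plan is to integrate the soliton equation along the geodesic $\gamma$, exactly mirroring the opening computation of Section~2 but exploiting the extra information that $q$ is a critical point of $f$. The key observation is that for any arc length parametrized geodesic $\gamma$, one has $\frac{d}{dt}\langle\nabla f,\dot\gamma\rangle = \mathrm{Hess}(f)(\dot\gamma,\dot\gamma)$, since $\nabla_{\dot\gamma}\dot\gamma=0$. Substituting the soliton identity $\mathrm{Hess}(f) = \frac{1}{2}g - \mathrm{Ric}$ yields $\frac{d}{dt}\langle\nabla f,\dot\gamma\rangle = \frac{1}{2} - \mathrm{Ric}(\dot\gamma,\dot\gamma)$.

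Next I would integrate this identity from $t=0$ to $t=\rho$. This produces
\[
\langle\nabla f,\dot\gamma\rangle(q) - \langle\nabla f,\dot\gamma\rangle(p) = \frac{\rho}{2} - \int_0^\rho \mathrm{Ric}(\dot\gamma,\dot\gamma)\,dt.
\]
Since $q$ is a critical point of $f$, the left endpoint term $\langle\nabla f,\dot\gamma\rangle(q)$ vanishes, while at $p$ we use the trivial bound $\langle\nabla f,\dot\gamma\rangle(p) \leq |\nabla f|(p)$ by Cauchy--Schwarz (recall $|\dot\gamma|=1$). Rearranging gives
\[
\int_0^\rho \mathrm{Ric}(\dot\gamma,\dot\gamma)\,dt = \frac{\rho}{2} + \langle\nabla f,\dot\gamma\rangle(p) \geq \frac{\rho}{2} - |\nabla f|(p),
\]
which is exactly \eqref{e3}.

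There is essentially no obstacle here: the statement is a direct fundamental-theorem-of-calculus computation along $\gamma$, made possible because the soliton equation is an equality (not just an inequality as in Theorem~\ref{T}), and because the critical-point hypothesis at $q$ kills the boundary term that would otherwise appear. The content of this lemma lies not in its proof but in how it will be combined later with an \emph{upper} bound on $\int_0^\rho \mathrm{Ric}(\dot\gamma,\dot\gamma)\,dt$ (of sublinear type in $\rho$, presumably derived from the bounded scalar curvature hypothesis together with the soliton identity $R+|\nabla f|^2=f$) to force $\rho$ to stay bounded, thereby confining all critical points of $f$ to a compact set.
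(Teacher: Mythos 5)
Your proof is correct and is essentially identical to the paper's: both integrate $\frac{d^2}{dt^2}f(\gamma(t))=\mathrm{Hess}(f)(\dot\gamma,\dot\gamma)=\frac12-\mathrm{Ric}(\dot\gamma,\dot\gamma)$ along $\gamma$, use $\langle\nabla f,\dot\gamma\rangle(q)=0$ at the critical point, and bound the term at $p$ by $|\nabla f|(p)$. (Only a cosmetic slip: $q$ corresponds to the endpoint $t=\rho$, not $t=0$.)
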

\begin{proof}
By a direct computation,
\begin{eqnarray}
0=\langle \nabla f,\dot{\gamma}\rangle(q)&=&\langle\nabla
f,\dot{\gamma}\rangle (p)+\int_{0}^{\rho}\frac{d^{2}}{dt^{2}}f(\gamma(t))dt\nonumber\\
&\geq&-|\nabla
f|(p)+\int_{0}^{\rho}(\frac{1}{2}-Ric(\dot{\gamma},\dot{\gamma}))dt.\nonumber
\end{eqnarray}
Then the result follows.
\end{proof}

On the other hand, by second variation formula as did in above
section, we can get an upper bound for
$\int_{0}^{\rho}Ric(\dot{\gamma},\dot{\gamma})dt.$ Precisely, for
the function $\psi$ defined by
$$\psi(t)=t,t\in[0,1];\psi(t)\equiv1,t\in[1,\rho_{i}-1];\psi(t)=\rho_{i}-t,t\in[\rho_{i}-1,\rho_{i}],$$
we have the estimate
\begin{eqnarray}
\int_{0}^{\rho}Ric(\dot{\gamma},\dot{\gamma})dt&\leq&\int_{0}^{\rho}(n-1)|\dot{\psi}|^{2}dt+
\int_{0}^{1}(1-\psi^{2})Ric(\dot{\gamma},\dot{\gamma})dt\nonumber\\
&+&
\int_{\rho-1}^{\rho}(1-\psi^{2})Ric(\dot{\gamma},\dot{\gamma})dt\nonumber\\
&\leq&2(n-1)+\sup_{B(p,1)}|Ric|+\int_{\rho-1}^{\rho}(1-\psi^{2})(\frac{1}{2}-\frac{d^{2}}{dt^{2}}f(\gamma(t)))dt\nonumber\\
&\leq&2(n-1)+1+\sup_{B(p,1)}|Ric|-\int_{\rho-1}^{\rho}(1-\psi^{2})\frac{d^{2}}{dt^{2}}f(\gamma(t))dt\nonumber.
\end{eqnarray}

Do integration by parts, we have the estimate for the last term
\begin{eqnarray}
-\int_{\rho-1}^{\rho}(1-\psi^{2})\frac{d^{2}}{dt^{2}}f(\gamma(t))dt&=&
2\int_{\rho-1}^{\rho}\psi\frac{d}{dt}f(\gamma(t))dt\nonumber\\
&=&-2f(\gamma(\rho-1))+2\int_{\rho-1}^{\rho}f(\gamma(t))dt.\nonumber
\end{eqnarray}
Substituting this equality into above estimate, we obtain

\begin{lemma}\label{l22}
\begin{eqnarray}\label{e4}
\int_{0}^{\rho}Ric(\dot{\gamma},\dot{\gamma})dt&\leq&2n+\sup_{B(p,1)}|Ric|+2\int_{\rho-1}^{\rho}f(\gamma(t))dt
-2f(\gamma(\rho-1))\nonumber\\
&\leq&2n+\sup_{B(p,1)}|Ric|+\sup_{x,y\in
B(q,1)}2|f(x)-f(y)|.\nonumber
\end{eqnarray}
\end{lemma}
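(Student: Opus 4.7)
The plan is to derive both inequalities from a single application of the second variation formula with a carefully chosen cutoff, and then exploit the soliton identity near the critical point $q$ to trade the remaining Ricci contribution for values of $f$ alone.

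First, I would take the piecewise linear test function $\psi$ on $[0,\rho]$ that rises linearly from $0$ to $1$ on $[0,1]$, is identically $1$ on $[1,\rho-1]$, and drops linearly back to $0$ on $[\rho-1,\rho]$. Inserting $\psi$ into the standard index-form / second variation inequality for the minimizing geodesic $\gamma$, as in \cite{W,Zh}, gives
$$\int_0^\rho \psi^2\, \mathrm{Ric}(\dot\gamma,\dot\gamma)\,dt \leq (n-1)\int_0^\rho |\dot\psi|^2\,dt = 2(n-1).$$
Writing $\int_0^\rho \mathrm{Ric}(\dot\gamma,\dot\gamma)\,dt$ as the sum of $\int_0^\rho \psi^2 \mathrm{Ric}\,dt$ and the two correction terms $\int_0^1 (1-\psi^2)\mathrm{Ric}\,dt$ and $\int_{\rho-1}^{\rho}(1-\psi^2)\mathrm{Ric}\,dt$, the first contributes at most $2(n-1)$ and the second is bounded by $\sup_{B(p,1)} |\mathrm{Ric}|$ because $\gamma([0,1])\subset B(p,1)$.

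The main step is the endpoint term near $q$, and this is where the soliton equation is essential. I would use $\mathrm{Ric}(\dot\gamma,\dot\gamma) = \tfrac{1}{2} - \tfrac{d^2}{dt^2}f(\gamma(t))$ to rewrite
$$\int_{\rho-1}^{\rho}(1-\psi^2)\,\mathrm{Ric}(\dot\gamma,\dot\gamma)\,dt \leq 1 - \int_{\rho-1}^{\rho}(1-\psi^2)\frac{d^2}{dt^2}f(\gamma(t))\,dt,$$
and then integrate by parts twice on the Hessian integral. In the first integration by parts, the boundary term at $t=\rho-1$ vanishes because $(1-\psi^2)(\rho-1)=0$, and the boundary term at $t=\rho$ vanishes \emph{because $q$ is a critical point of $f$}, so that $\langle \nabla f,\dot\gamma\rangle(q)=0$; this is the key use of the hypothesis. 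A second integration by parts, using $\psi(\rho)=0$ and $\psi(\rho-1)=1$, produces exactly the expression $2\int_{\rho-1}^{\rho}f(\gamma(t))\,dt - 2f(\gamma(\rho-1))$, yielding the first inequality of the lemma.

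The second inequality is then cosmetic: since $\gamma([\rho-1,\rho])\subset B(q,1)$, rewriting $2\int_{\rho-1}^{\rho}f(\gamma(t))\,dt - 2f(\gamma(\rho-1)) = 2\int_{\rho-1}^{\rho}\bigl[f(\gamma(t))-f(\gamma(\rho-1))\bigr]\,dt$ bounds it by $2\sup_{x,y\in B(q,1)}|f(x)-f(y)|$. The only delicate point in the whole argument is the vanishing of the boundary term at $q$ in the first integration by parts; everything else is bookkeeping. This is what makes the estimate genuinely useful later: the right-hand side depends only on local oscillation of $f$ near $p$ and near $q$, which, combined with the identity $R+|\nabla f|^2=f$ and the assumed bound on $R$, will force a contradiction if $q$ is allowed to lie outside a large compact set.
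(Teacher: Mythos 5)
Your proposal is correct and follows essentially the same route as the paper: the same cutoff $\psi$, the same three-term decomposition with the second variation bound $2(n-1)$ and the $\sup_{B(p,1)}|Ric|$ term, the same substitution of the soliton equation on $[\rho-1,\rho]$, and the same double integration by parts yielding $2\int_{\rho-1}^{\rho}f(\gamma(t))\,dt-2f(\gamma(\rho-1))$. You in fact make explicit a point the paper leaves implicit, namely that the boundary term at $t=\rho$ in the first integration by parts vanishes precisely because $q$ is a critical point of $f$.
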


Now we use equation (\ref{e2}) to give an upper bound of
$\int_{0}^{\rho}Ric(\dot{\gamma},\dot{\gamma})dt$. First by equation
(\ref{e2}) we have the gradient estimate $|\nabla
f|\leq\sqrt{f-R}\leq\sqrt{f+C}$. Then by assumption, $q$ is a
critical point of $f$, so $|f(q)|=|R(q)|\leq C$. Integrating along a
geodesic, we see that for any $x\in B(q,1)$
\begin{equation}\label{e6}
\sqrt{f(x)+C}\leq\sqrt{f(q)+C}+\frac{d(x,q)}{2}\leq\sqrt{2C}+1.
\end{equation}
Thus $f(x)\leq3C+2$ for all $x\in B(q,1)$ and consequently
\begin{equation}\label{e7}
\sup_{x,y\in B(q,1)}|f(x)-f(y)|\leq(3C+2+C)=4C+2.
\end{equation}
The combination of Lemma \ref{l21}, Lemma \ref{l22} and equation
(\ref{e7}) gives the upper bound of the distance $\rho=d(p,q)$:
$$\rho\leq4n+8C+4+2|\nabla f|(p)+\sup_{B(p,1)}2|Ric|.$$

Note that the arguments above just used the upper boundedness of
$f$. By the same reason as before, we conclude that $f$ is proper
and then finish the proof of the theorem.

\end{document}